\numberwithin{equation}{section}
\numberwithin{table}   {section}
\newtheorem{thm}        {Theorem}[section]
\newtheorem{lem}  [thm] {Lemma}
\newtheorem{cor}  [thm] {Corollary}
\newtheorem{prop} [thm] {Proposition}
\newtheorem{prob} [thm] {Problem}
\theoremstyle{definition}
\newtheorem{dfn}  [thm] {Definition}
\newtheorem{rmk}  [thm] {Remark}
\newtheorem{nts}  [thm] {Notation}
\DeclarePairedDelimiterX\set[1]\lbrace\rbrace{\,\def\given{\;\delimsize\vert\;}#1\,}
\newcommand{\nat}
    {\mathbb{N}}
\newcommand{\Fp}
    {\ensuremath{\mathbb{F}_p}}
\newcommand{\alt}[1]
    {\ensuremath{\mathfrak{A}_{#1}}}
\newcommand{\order}[1]
    {\ensuremath{\lvert{#1}\rvert}}
\renewcommand{\radical}[1]
    {\ensuremath{\operatorname{Rad}\left({#1}\right)}}
\newcommand{\rad}[2]
    {\ensuremath{\operatorname{Rad}^{#1}\left({#2}\right)}}
\newcommand{\soc}[2]
    {\ensuremath{\operatorname{Soc}^{#1}\left({#2}\right)}}
\newcommand{\loewy}[1]
    {\ensuremath{\ell\ell\left(#1\right)}}
\newcommand{\z}[1]
    {\ensuremath{Z(#1)}}
\newcommand{\ordprod}
    {\sideset{}{'}\prod}
\newcommand{\ordprodsub}[1]
    {\ordprod_{\substack{#1}}}
\newcommand{\sumsub}[1]
    {\sum_{\substack{#1}}}
\newcommand{\zs}[2]
    {\ensuremath{ZS^{#1}(#2)}}
\newcommand{\gij}
    {g_{ij}}
\newcommand{\mij}
    {{m_{ij}}}
\begin{document}

\title
    [Central elements of the Jennings basis]
    {Central elements of the Jennings basis and\\ certain Morita invariants}
\date{\today}
\author[T. Sakurai]
    {\href{https://orcid.org/0000-0003-0608-1852}
    {Taro Sakurai}}
\address{
    Department of Mathematics and Informatics,
    Graduate School of Science,
    Chiba University,
    1-33,
    Yayoi-cho,
    Inage-ku,
    Chiba-shi,
    Chiba,
    263-8522 Japan
}
\email{tsakurai@math.s.chiba-u.ac.jp}
\keywords{
    Morita invariant,
    center,
    socle,
    Reynolds ideal,
    \( p \)-group,
    Jennings basis,
    dimension subgroup
}
\subjclass[2010]{16G30 (primary), 16U70, 16D90, 16D25, 20C20 (secondary).}
% .
% |
% +-- 16-XX Associative rings and algebras
% |   +-- 16Dxx Modules, bimodules and ideals
% |   |   +-- 16D25 Ideals
% |   |   +-- 16D90 Module categories; module theory in category-theoretic context; Morita equivalence and duality
% |   |
% |   +-- 16Gxx Representation theory of rings and algebras
% |   |   +-- 16G30 Representations of orders, lattices, algebras over commutative rings
% |   |
% |   +-- 16Uxx Conditions on elements
% |       +-- 16U70 Center, normalizer (invariant elements)
% |
% +-- 20-XX Group theory and generalizations
%     +-- 20Cxx Representation theory of groups
%         +-- 20C20 Modular representations and characters

\begin{abstract}
    From Morita theoretic viewpoint, computing Morita invariants is important.
    We prove that the intersection of the center and the \( n \)th (right) socle
    \( ZS^n(A) := Z(A) \cap \operatorname{Soc}^n(A) \)
    of a finite-dimensional algebra \( A \) is a Morita invariant;
    This is a generalization of important Morita invariants --- the center \( Z(A) \) and the Reynolds ideal \( ZS^1(A) \).

    As an example, we also studied \( ZS^n(FG) \) for the group algebra \( FG \) of a finite \( p \)-group \( G \) over a field \( F \) of positive characteristic \( p \).
    Such an algebra has a basis along the socle filtration, known as the Jennings basis.
    We prove certain elements of the Jennings basis are central and hence form a linearly independent set of \( ZS^n(FG) \).
    In fact, such elements form a basis of \( ZS^n(FG) \) for every integer \( 1 \le n \le p \) if \( G \) is powerful.
    As a corollary we have \( \operatorname{Soc}^p(FG) \subseteq Z(FG) \) if \( G \) is powerful.
\end{abstract}

\maketitle

\setcounter{tocdepth}{1}
\tableofcontents

\section{Introduction}

\noindent % the very first line of the paper is not indented
From Morita theoretic viewpoint,
    computing Morita invariants is important to distinguish algebras that are not Morita equivalent.
We prove that the intersection of the center and the \( n \)th (right) socle
\begin{equation}
    \label{eq: ZS}
    \zs{n}{A} := \z{A} \cap \soc{n}{A}
\end{equation}
is Morita invariant for a finite-dimensional algebra \( A \) (Theorem~\ref{thm: invariant}).
This is a generalization of important Morita invariants
    --- the center \z{A} and the Reynolds ideal \zs{1}{A}.
The other way of generalization is known as the K\"ulshammer ideals or the generalized Reynolds ideals for a finite-dimensional symmetric algebra;
    For more details we refer the reader to the survey by Zimmermann \cite{Zim11}.

The centers and the Reynolds ideals are particularly interesting for finite group algebras.
Let \( G \) be a finite group and \( F \) an algebraically closed field \( F \) of positive characteristic \( p \).
First,
    the dimension of the center             \z{FG}  equals
        the number of irreducible ordinary characters \( k(G) \) and
    the dimension of the Reynolds ideal \zs{1}{FG}  equals
        the number of irreducible modular  characters \( \ell(G) \).
Next,
    the         conjugacy class sums form a basis of the center and
    the \( p \)-regular section sums form a basis of the Reynolds ideal \cite{Kue81}.
Moreover,
    Okuyama \cite{Oku81} proved
    \begin{equation}
        \label{eq: okuyama}
        \dim \zs{2}{FG} = \ell(G) + \sum \dim \operatorname{Ext}_A^1(S, S)
    \end{equation}
    where the sum is taken over a complete set of simple \( FG \)-modules.
(In fact, he proved that for a block of a finite group algebra.)
See also \cite[Theorem~2.1]{Kos16} which is written in English.
These are summarized in Table~\ref{tbl: known}.

\begin{table}[h]
    \centering
    \caption{What is known about \( \zs{n}{FG} = \z{FG} \cap \soc{n}{FG} \).}
    \label{tbl: known}
    \begin{tabular}{lll}
        \toprule
        % (blank)
            & dimension
                & basis
        \\
        % (blank)
            & (representation-theoretic)
                & (group-theoretic)
        \\

        \cmidrule(lr){2-3}
        \z{FG}
            & \( k(G) \)
                & conjugacy class sums
        \\
        \zs{n}{FG}
            & unknown
                & unknown
        \\
        \zs{2}{FG}
        & \( \ell(G) + \sum \dim \operatorname{Ext}^1(S, S) \)
            & unknown\footnotemark[2]
        \\
        \zs{1}{FG}
            & \( \ell(G) \)
                & \( p \)-regular section sums
        \\
        \bottomrule
    \end{tabular}
\end{table}
\footnotetext[2]{Except finite \( p \)-groups; see Remark~\ref{rmk: corner cases}.}

As \zs{n}{FG} is a generalization of these,
    we want to know what is the dimension and what a basis can be (Problem~\ref{prob}).
One of manageable examples to compute socle series is the case for a finite \( p \)-group \( G \).
Jennings constructed group-theoretically a basis of \( FG \) along the radical filtration (Theorem~\ref{thm: Jennings}) and
it follows that radical series coincides with socle series (Theorem~\ref{thm: Jennings-Brauer}).
Such a basis is known as the Jennings basis (Definition~\ref{dfn: Jennings basis}).
To study \zs{n}{FG} let us ask a question:
    \begin{center} When is an element of the Jennings basis central? \end{center}
We prove that certain elements of the Jennings basis are central and hence form a linearly independent set of \zs{n}{FG} (Theorem~\ref{thm: main}).
In fact, such elements form a basis of \zs{n}{FG} for every integer \( 1 \le n \le p \) if \( G \) is powerful (Theorem~\ref{thm: powerful}).
As a corollary we have \( \soc{p}{FG} \subseteq \z{FG} \) if \( G \) is powerful (Corollary~\ref{cor: soc p}).
Proofs are given in Section~\ref{sec: proofs}.
Section~\ref{sec: examples} is devoted to illustrate our results by the first non-trivial examples.

\section{Morita invariants}

\label{sec: invariant}
H\'ethelyi et al. \cite[Corollary~5.3]{HHKM05} proved that the K\"ulshammer ideals are Morita invariants.
(Later Zimmermann \cite{Zim07} proved that those are derived invariants as well.)
Inspired by the proof we prove that \zs{n}{A}, ideals of the center, are Morita invariants.
The \( n \)th Jacobson radical of \( A \) will be denoted by \rad{n}{A}.

\begin{lem}
    \label{lem: Loewy length}
    The Loewy length \loewy{A} of a finite-dimensional algebra \( A \) over a field
    can be expressed by \zs{n}{A} as
    \begin{equation*}
        \loewy{A} = \min\set{ n \in \nat \given \zs{n}{A} = \z{A} }.
    \end{equation*}
\end{lem}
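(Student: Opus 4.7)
The plan is to use the trivial yet crucial observation that the identity \( 1 \) lies in \( \z{A} \), which lets us collapse the condition \( \zs{n}{A} = \z{A} \) down to the much simpler condition \( \soc{n}{A} = A \).

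First I would recall that by the very definition of Loewy length, \( \loewy{A} \) equals the smallest \( n \in \nat \) for which \( \soc{n}{A} = A \) (equivalently, for which \( \rad{n}{A} = 0 \)). So it suffices to verify the set equality
\begin{equation*}
    \set{ n \in \nat \given \zs{n}{A} = \z{A} }
    = \set{ n \in \nat \given \soc{n}{A} = A }.
\end{equation*}

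The inclusion \( \supseteq \) is immediate: if \( \soc{n}{A} = A \), then intersecting with \( \z{A} \) gives \( \zs{n}{A} = \z{A} \). For the reverse inclusion, suppose \( \zs{n}{A} = \z{A} \), so that \( \z{A} \subseteq \soc{n}{A} \). Since \( 1 \in \z{A} \), this forces \( 1 \in \soc{n}{A} \). But \( \soc{n}{A} \) is a right ideal (being a right \( A \)-submodule of \( A_A \)), so containing \( 1 \) forces \( \soc{n}{A} = A \).

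There is no real obstacle here — the entire content of the lemma is the observation that \( 1 \in \z{A} \) propagates the apparently weaker condition \( \z{A} \subseteq \soc{n}{A} \) into the full equality \( \soc{n}{A} = A \). The lemma is thus essentially a sanity check that the invariant \( \zs{n}{A} \) still detects the Loewy length, and it will later serve as the reason why the Morita invariance of each \( \zs{n}{A} \) recovers the Morita invariance of Loewy length as a corollary.
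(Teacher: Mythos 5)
Your proof is correct and is exactly the argument the paper leaves implicit behind its one-word proof (``Clear.''): the key point that \( 1 \in \z{A} \) forces \( \zs{n}{A} = \z{A} \) to collapse to \( \soc{n}{A} = A \), hence to \( \rad{n}{A} = 0 \), is the whole content. Nothing to add.
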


\begin{proof}
    Clear.
\end{proof}

\begin{lem}
    \label{lem: rad idemp}
    Let \( A \) be a finite-dimensional algebra over a field and
    \( e \in A \) a full idempotent.
    Then \( \rad{n}{eAe} = e\rad{n}{A}e \) for every \( n \in \nat \).
\end{lem}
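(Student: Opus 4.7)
The plan is to proceed by induction on $n$. Since $A$ is finite-dimensional, the $n$th Jacobson radical coincides with the $n$th power of the ordinary Jacobson radical, both for $A$ and for $eAe$; that is, $\rad{n}{A} = \radical{A}^n$ and $\rad{n}{eAe} = \radical{eAe}^n$. The claim therefore reduces to
\[
    \bigl(e\radical{A}e\bigr)^n = e\radical{A}^n e.
\]
The base case $n = 1$ is the classical identity $\radical{eAe} = e\radical{A}e$, which holds for any idempotent (full or not); I would recover it by noting that $e\radical{A}e$ is nilpotent as a subset of the nilpotent ideal $\radical{A}$, while the quotient $eAe / e\radical{A}e$ is isomorphic to the corner $e(A/\radical{A})e$ of a semisimple algebra, hence is itself semisimple. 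These two properties characterise $\radical{eAe}$.

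For the inductive step, the inclusion $(\subseteq)$ is immediate: a product $(e r_1 e)(e r_2 e) \cdots (e r_n e)$ lies in $e\radical{A} \cdot A \cdots A \cdot \radical{A}\, e \subseteq e\radical{A}^n e$. The reverse inclusion $(\supseteq)$ is where the fullness hypothesis $AeA = A$ enters. Using it I would write $1 = \sum_k x_k e y_k$, and then, given a typical element $e r_1 r_2 \cdots r_n e \in e\radical{A}^n e$, insert this decomposition of $1$ between $r_1$ and $r_2$ to obtain
\[
    e r_1 r_2 \cdots r_n e = \sum_k (e r_1 x_k e)(e y_k r_2 \cdots r_n e).
\]
Since $\radical{A}$ is a two-sided ideal, the left factor lies in $e\radical{A}e$ and the right in $e\radical{A}^{n-1}e$; by the induction hypothesis the latter sits inside $\bigl(e\radical{A}e\bigr)^{n-1}$, so the whole sum lies in $\bigl(e\radical{A}e\bigr)^n$, as required.

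The main obstacle is this reverse inclusion in the inductive step: without the fullness of $e$ one genuinely cannot split an element of $e\radical{A}^n e$ into a product of elements of $eAe$, so the hypothesis $AeA = A$ has to be used exactly there. Everything else --- the base case and the forward inclusion --- is formal bookkeeping.
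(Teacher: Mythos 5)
Your proof is correct. It is worth noting, though, that the paper disposes of this lemma with a one-line citation: for a full idempotent $e$, the map $I \mapsto eIe$ is a lattice isomorphism from the two-sided ideals of $A$ onto those of $eAe$ which moreover preserves multiplication of ideals (Lam, \emph{A First Course in Noncommutative Rings}, Theorem~21.11(2)); applying this to $I = \radical{A}$ immediately gives $\rad{n}{eAe} = \radical{eAe}^n = (e\radical{A}e)^n = e\radical{A}^n e$. What you have done is essentially unpack that citation into a self-contained argument: your base case is the standard identity $\radical{eAe} = e\radical{A}e$ (and your derivation of it via nilpotence of $e\radical{A}e$ together with semisimplicity of the corner $\bar{e}(A/\radical{A})\bar{e}$ is fine in the finite-dimensional setting, where $\radical{A}$ is nilpotent), and your insertion of $1 = \sum_k x_k e y_k$ between $r_1$ and $r_2$ is exactly the mechanism by which one proves $e(IJ)e \subseteq (eIe)(eJe)$ for a full idempotent --- i.e., the multiplicativity statement the paper quotes. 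You also correctly isolate where fullness is indispensable: the forward inclusion and the base case hold for an arbitrary idempotent, but the splitting $e\radical{A}^n e \subseteq (e\radical{A}e)^n$ genuinely needs $AeA = A$. So the two proofs buy the same thing by the same idea; yours is longer but self-contained, the paper's is shorter but outsources the work.
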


\begin{proof}
    Since there is a canonical lattice isomorphism from ideals of \( A \) to those of \( eAe \)
    which preserves multiplication of ideals,
    the claim follows.
    (See \cite[Theorem~21.11(2)]{Lam91}.)
\end{proof}

\begin{thm}
    \label{thm: invariant}
    Let \( A \) and \( B \) be Morita equivalent finite-dimensional algebras over a field.
    Then there is an algebra isomorphism \( \z{A} \to \z{B} \)
    mapping \zs{n}{A} onto \zs{n}{B} for every \( n \in \nat \).
    In particular, \zs{n}{A} are Morita invariants.
\end{thm}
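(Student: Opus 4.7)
My plan is to reduce the theorem to two standard special cases and verify each using Lemma~\ref{lem: rad idemp}. Every Morita equivalence between \( A \) and \( B \) is realized by an isomorphism \( B \cong e M_m(A) e \) for some \( m \ge 1 \) and some full idempotent \( e \in M_m(A) \) (i.e., \( M_m(A) e M_m(A) = M_m(A) \)); since \( A \) is itself Morita equivalent to \( M_m(A) \), it suffices to produce an isomorphism of centers matching \( \zs{n}{\cdot} \) for each of the two transitions \( A \rightsquigarrow M_m(A) \) and \( A \rightsquigarrow eAe \), where in the second \( e \in A \) is an arbitrary full idempotent.

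For \( A \rightsquigarrow M_m(A) \) I would use the diagonal ring isomorphism \( \iota \colon \z{A} \to \z{M_m(A)} \), \( z \mapsto zI_m \). Because \( \rad{n}{M_m(A)} = M_m(\rad{n}{A}) \), the matrix \( zI_m \) annihilates \( \rad{n}{M_m(A)} \) on the right precisely when \( z\rad{n}{A} = 0 \), so \( \iota \) restricts to a bijection \( \zs{n}{A} \to \zs{n}{M_m(A)} \).

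For \( A \rightsquigarrow eAe \) with \( e \) full I would use the classical ring isomorphism \( \phi \colon \z{A} \to \z{eAe} \), \( z \mapsto ze = eze \). By Lemma~\ref{lem: rad idemp}, \( \rad{n}{eAe} = e\rad{n}{A}e \). The forward inclusion \( \phi(\zs{n}{A}) \subseteq \zs{n}{eAe} \) is immediate, since \( e\rad{n}{A}e \subseteq \rad{n}{A} \) and centrality of \( z \) gives \( (eze)\cdot e\rad{n}{A}e = z\cdot e\rad{n}{A}e \subseteq z\rad{n}{A} \). The reverse inclusion is the one step requiring actual thought: from \( z\cdot e\rad{n}{A}e = 0 \) I would promote to \( z\rad{n}{A} = 0 \) by invoking the factorization \( \rad{n}{A} = A\cdot e\rad{n}{A}e\cdot A \), which follows from substituting \( A = AeA \) into the identity \( \rad{n}{A} = A\rad{n}{A}A \); centrality of \( z \) then pulls the central factor through and annihilates everything.

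I do not anticipate any real obstacle beyond bookkeeping; the argument rides entirely on Lemma~\ref{lem: rad idemp} and the fullness identity \( AeA = A \). Composing the two isomorphisms yields the claimed isomorphism \( \z{A} \to \z{B} \) sending \( \zs{n}{A} \) onto \( \zs{n}{B} \) for every \( n \in \nat \), and Morita-invariance of \( \zs{n}{A} \) follows immediately.
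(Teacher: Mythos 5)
Your proof is correct, and its engine is the same as the paper's: everything comes down to the corner-algebra case \( B = eAe \) with \( e \) a full idempotent, the center isomorphism \( z \mapsto eze \), and Lemma~\ref{lem: rad idemp}. The differences are in the packaging. The paper reduces to the single transition \( A \rightsquigarrow eAe \) by passing to the basic algebra and quoting that Morita equivalent basic algebras are isomorphic (this is where finite-dimensionality enters), whereas you invoke the general Morita theorem \( B \cong eM_m(A)e \) and therefore need the extra, routine, matrix step; your route is more general (it would work for arbitrary rings once one knows \( \rad{n}{M_m(A)} = M_m(\rad{n}{A}) \)) but leans on a slightly heavier citation. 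For surjectivity onto \( \zs{n}{eAe} \) the paper writes down the explicit inverse \( \psi(b) = \sum_k u_k b v_k \) coming from \( \sum_k u_k e v_k = 1 \) and checks \( \psi(\zs{n}{B}) \subseteq \zs{n}{A} \), while you take the isomorphism \( \z{A} \cong \z{eAe} \) as known and promote \( z \cdot e\rad{n}{A}e = 0 \) to \( z\rad{n}{A} = 0 \) via the factorization \( \rad{n}{A} = Ae\rad{n}{A}eA \), which is valid since \( AeA = A \) and \( \rad{n}{A} \) is a two-sided ideal of the unital algebra \( A \). Both verifications are correct and of comparable length; yours isolates the role of fullness a bit more cleanly, while the paper's is more self-contained.
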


\begin{proof}
    It suffices to prove the case for \( A \) and its basic algebra \( B:= eAe \)
    because Morita equivalent basic algebras are isomorphic.
    Since the basic idempotent \( e \in A \) is full,
    there exist \( u_k, v_k \in A \) such that
    \begin{equation}
        \label{eq: unit decomp}
        \sum_k u_k e v_k = 1.
    \end{equation}
    Then define algebra homomorphisms
    \begin{equation*} \begin{tikzcd}
        \z{A} \arrow[r, "\phi", shift left]
        &
        \z{B} \arrow[l, "\psi", shift left]
    \end{tikzcd} \end{equation*}
    by
    \( \phi(a) = eae \)              for \( a \in \z{A} \) and
    \( \psi(b) = \sum_k u_k b v_k \) for \( b \in \z{B} \).
    These are well-defined and mutually inverse.
    By \eqref{eq: unit decomp} and Lemma~\ref{lem: rad idemp} we have
    \begin{equation*}
        \phi\big(\zs{n}{A}\big) \subseteq \zs{n}{B}
        \quad \text{and} \quad
        \psi\big(\zs{n}{B}\big) \subseteq \zs{n}{A},
    \end{equation*}
    and the proof completes.
\end{proof}

\begin{rmk}
    In general, \zs{n}{A} are not derived invariants.
    Let \( F \) be an algebraically closed field of characteristic two,
    \alt{n} the alternating group of degree \( n \), and
    \( e_0 \) the principal block idempotent of \( F\alt{5} \).
    Then
    \(
        \loewy{F\alt{4}} = 3 \neq 5 = \loewy{e_0 F\alt{5}}.
    \)
    In particular, by Lemma~\ref{lem: Loewy length}, we have
    \begin{equation*}
        \dim \zs{3}{    F\alt{4}}
        \neq
        \dim \zs{3}{e_0 F\alt{5}}
    \end{equation*}
    although \( F\alt{4} \) and \( e_0 F\alt{5} \) are derived equivalent.
    (cf. the Brou\'e abelian defect conjecture.)
\end{rmk}

Now let us study how \zs{n}{A} behaves with field extensions.

\begin{lem}
    \label{lem: ext rad soc}
    Let \( A \) be a finite-dimensional algebra over a field \( F \) and suppose \( A/\radical{A} \) is separable.
    Then for every field extension \( E/F \) and \( n \in \nat \) we have the following.
    \begin{enumerate}[label={\upshape(\roman*)}]
        \item
            \( \rad{n}{A \otimes_F E} = \rad{n}{A} \otimes_F E \).
            \label{item: radical extension}
        \item
            \( \soc{n}{A \otimes_F E} = \soc{n}{A} \otimes_F E \).
            \label{item: socle extension}
    \end{enumerate}
\end{lem}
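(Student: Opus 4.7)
My plan is to prove part (i) first via the classical consequence of separability, and then derive part (ii) from (i) using the standard description of higher right socles as left annihilators of powers of the radical.

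For (i), I would begin with the case $n = 1$. The separability hypothesis on $A/\radical{A}$ over $F$ is, by definition, precisely the statement that $(A/\radical{A}) \otimes_F E$ remains semisimple for every field extension $E/F$. Since $\radical{A} \otimes_F E$ is a nilpotent two-sided ideal of $A \otimes_F E$ (the nilpotency index of $\radical{A}$ is preserved under the tensor product) and the quotient $(A \otimes_F E)/(\radical{A} \otimes_F E) \cong (A/\radical{A}) \otimes_F E$ is semisimple, it must coincide with $\radical{A \otimes_F E}$. For general $n$, flatness of $E$ over $F$ makes $- \otimes_F E$ compatible with products of ideals, so that
\[
    \rad{n}{A} \otimes_F E = \radical{A}^n \otimes_F E = (\radical{A} \otimes_F E)^n = \radical{A \otimes_F E}^n = \rad{n}{A \otimes_F E}.
\]

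For (ii), the starting point is the identification
\[
    \soc{n}{A} = \set{ a \in A \given a \rad{n}{A} = 0 },
\]
valid for any finite-dimensional algebra and provable by induction on $n$ from the case $n = 1$ together with the nilpotency of the radical. Applying this identification to $A \otimes_F E$ (which inherits the separability hypothesis) and substituting part (i) yields
\[
    \soc{n}{A \otimes_F E} = \set{ x \in A \otimes_F E \given x(\rad{n}{A} \otimes_F E) = 0 }.
\]
It remains to match this left annihilator with $\soc{n}{A} \otimes_F E$. Since $\rad{n}{A}$ is finite-dimensional over $F$, the annihilator is the kernel of the $F$-linear map $A \to \operatorname{Hom}_F(\rad{n}{A}, A)$, $a \mapsto (r \mapsto ar)$; base-changing along the flat extension $F \to E$ and using the natural isomorphism $\operatorname{Hom}_F(V, A) \otimes_F E \cong \operatorname{Hom}_E(V \otimes_F E, A \otimes_F E)$ for finite-dimensional $V$ makes taking kernels commute with base change, completing the argument.

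The one genuinely non-formal step is the $n = 1$ case of (i), which is exactly where separability of $A/\radical{A}$ is essential: without it, $(A/\radical{A}) \otimes_F E$ may acquire new nilpotents and $\radical{A \otimes_F E}$ can properly contain $\radical{A} \otimes_F E$. The rest of the argument --- passing from $n = 1$ to general $n$ and translating from radical to socle --- is straightforward flatness and finite-dimensionality bookkeeping.
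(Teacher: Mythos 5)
Your proposal is correct and follows essentially the same route as the paper: part (i) via the base-change property of the radical under the separability hypothesis (which the paper outsources to Nagao--Tsushima and induction, and you prove directly using nilpotency plus semisimplicity of \( (A/\radical{A})\otimes_F E \)), and part (ii) via the identification of \( \soc{n}{\cdot} \) with the left annihilator of \( \rad{n}{\cdot} \) combined with (i). The only cosmetic difference is that the paper verifies the annihilator computation by hand with an \( F \)-basis of \( E \), whereas you package it as kernels commuting with flat base change; these are the same argument.
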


\begin{proof}
    \ref{item: radical extension}:
    The proof is by induction on \( n \) and \cite[Lemma 2.5.1(ii)]{NT89}.

    \ref{item: socle extension}:
    Let \( a \in \soc{n}{A \otimes_F E} \) and
    take an \( F \)-basis \( \set{ \varepsilon_\lambda \given \lambda \in \Lambda } \) of \( E \).
    Since
    \(
        A \otimes_F E = \bigoplus A \otimes \varepsilon_\lambda
    \)
    we can express
    \(
        a = \sum a_\lambda \otimes \varepsilon_\lambda
    \)
    for some \( a_\lambda \in A \), which equals zero for all but finitely many \( \lambda \in \Lambda \).
    For every \( z \in \rad{n}{A} \) we have \( z \otimes 1 \in \rad{n}{A \otimes_F E} \) by \ref{item: radical extension}.
    Hence
    \begin{equation*}
        \sum (a_\lambda z) \otimes \varepsilon_\lambda
        =
        a(z \otimes 1)
        =
        0.
    \end{equation*}
    By uniqueness we have \( a_\lambda z = 0 \),
    which implies \( a_\lambda \in \soc{n}{A} \), for all \( \lambda \in \Lambda \).
    We thus get   \( \soc{n}{A} \otimes_F E \subseteq \soc{n}{A \otimes_F E} \).

    The proof for \( \soc{n}{A} \otimes_F E \subseteq \soc{n}{A \otimes_F E} \) is easier.
\end{proof}

\begin{prop}
    \label{prop: ext zs}
    Let \( A \) be a finite-dimensional algebra over a field \( F \) and suppose \( A/\radical{A} \) is separable.
    Then for every field extension \( E/F \) and \( n \in \nat \) we have
    \begin{equation*}
        \zs{n}{A \otimes_F E} = \zs{n}{A} \otimes_F E.
    \end{equation*}
\end{prop}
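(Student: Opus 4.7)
The plan is to combine Lemma~\ref{lem: ext rad soc}\ref{item: socle extension} with the analogous (and easier) fact about the center, namely \( \z{A \otimes_F E} = \z{A} \otimes_F E \). This center identity holds for any field extension and does not require separability: writing elements in a basis of \( E/F \), the centrality condition \( [a, x] = 0 \) for all \( x \in A \otimes_F E \) reduces, by \( E \)-linearity, to centrality of each coordinate component in \( A \). So this part is essentially free.

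Once both identities are in hand, the proposition reduces to the purely linear-algebraic claim that for subspaces \( U, V \) of a common \( F \)-vector space \( W \),
\begin{equation*}
    (U \otimes_F E) \cap (V \otimes_F E) = (U \cap V) \otimes_F E
\end{equation*}
inside \( W \otimes_F E \). One inclusion is obvious; the other follows from the flatness of \( E \) over \( F \), applied to the short exact sequence \( 0 \to U \cap V \to U \oplus V \to U + V \to 0 \). Applying this to \( U = \z{A} \), \( V = \soc{n}{A} \), and \( W = A \) will finish the argument.

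So the order of steps will be: first invoke the center identity \( \z{A \otimes_F E} = \z{A} \otimes_F E \); then invoke Lemma~\ref{lem: ext rad soc}\ref{item: socle extension} for the socle; then apply the intersection-commutes-with-flat-base-change lemma to combine them. The only step where the separability hypothesis is used is the second one (inherited from Lemma~\ref{lem: ext rad soc}); the other two steps are general.

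I do not expect a real obstacle here — the whole proof is a short assembly of known facts. The one place a reader could stumble is the flatness step (intersections commuting with tensor by \( E \)); it would be cleanest to cite the short exact sequence argument explicitly rather than leaving it implicit, since the identity \( \zs{n}{A} \otimes_F E = (\z{A} \cap \soc{n}{A}) \otimes_F E \) is exactly what one needs to identify with the intersection on the extended algebra.
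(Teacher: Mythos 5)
Your proposal is correct and follows essentially the same route as the paper's proof: the paper likewise writes \( \zs{n}{A \otimes_F E} \) as the intersection of \( \z{A \otimes_F E} = \z{A} \otimes_F \z{E} \) (citing Nagao--Tsushima) with \( \soc{n}{A \otimes_F E} = \soc{n}{A} \otimes_F E \) from Lemma~\ref{lem: ext rad soc}, and then uses the fact that intersections commute with scalar extension. The only difference is that you supply elementary arguments (the basis computation for the center and the flatness/short-exact-sequence argument for the intersection) where the paper cites references, and both proofs use separability only in the socle step.
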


\begin{proof}
    \begin{align*}
        \zs{n}{A \otimes_F E}
            &= \z{A \otimes_F E} \cap \soc{n}{A \otimes_F E}
                & &
            \\
            &= \big(\z{A} \otimes_F \z{E}\big) \cap \soc{n}{A \otimes_F E}
                & &\text{(By \cite[Lemma~2.4.1(iii)]{NT89}.)}
            \\
            &= \big(\z{A} \otimes_F E\big) \cap \big(\soc{n}{A} \otimes_F E\big)
                & &\text{(By Lemma~\ref{lem: ext rad soc}\ref{item: socle extension}.)}
            \\
            &= \big(\z{A} \cap \soc{n}{A}\big) \otimes_F E
                & &\text{(By \cite[Problem 1.21]{NT89}.)}
            \\
            &= \zs{n}{A} \otimes_F E.
                & &
            \qedhere
    \end{align*}
\end{proof}

\begin{cor}
    \label{cor: reduction}
    Let \( F \) be a field of positive characteristic \( p \) and \( G \) a finite group.
    Then for every \( n \in \nat \) we have
    \begin{equation*}
        \zs{n}{FG} = \zs{n}{\Fp G} \otimes_{\Fp} F.
    \end{equation*}
\end{cor}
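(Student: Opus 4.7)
The plan is to apply Proposition~\ref{prop: ext zs} with base algebra $A = \Fp G$ and field extension $E/F := F/\Fp$; the corollary then drops out once we have verified the two things Proposition~\ref{prop: ext zs} needs, namely a suitable identification of $FG$ as a tensor product and the separability hypothesis.

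First I would identify $FG$ with $\Fp G \otimes_{\Fp} F$ as $F$-algebras. This is the standard base-change isomorphism for group algebras: the $\Fp$-bilinear map $\Fp G \times F \to FG$ given by $(\sum a_g g, \lambda) \mapsto \sum (\lambda a_g) g$ induces an $F$-algebra isomorphism sending $g \otimes 1$ to $g$. Nothing subtle happens here; I would just note the isomorphism explicitly so that the right-hand side of the corollary makes literal sense inside $FG$.

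Next I need to check that $\Fp G / \radical{\Fp G}$ is separable over $\Fp$. Here the key point is that $\Fp$ is perfect: by Wedderburn--Artin this quotient is a finite product of matrix algebras over division rings, each of finite dimension over $\Fp$; such division rings are finite by dimension count and hence commutative by Wedderburn's little theorem, so they are finite extensions of $\Fp$. Every algebraic extension of a perfect field is separable, so each factor is a separable matrix algebra over a separable field extension of $\Fp$, and a product of separable algebras is separable.

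Having both hypotheses in hand, I would invoke Proposition~\ref{prop: ext zs} to conclude
\begin{equation*}
    \zs{n}{\Fp G \otimes_{\Fp} F} = \zs{n}{\Fp G} \otimes_{\Fp} F,
\end{equation*}
and then transport the left-hand side along the isomorphism $FG \cong \Fp G \otimes_{\Fp} F$ from the first step to obtain the stated equality. There is no real obstacle; the only thing worth emphasising is the perfectness of $\Fp$, which is what makes the separability hypothesis of Proposition~\ref{prop: ext zs} automatic for group algebras over prime fields (and hence makes reduction to $\Fp$ possible at all).
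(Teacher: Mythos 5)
Your proposal is correct and follows essentially the same route as the paper: identify $FG$ with $\Fp G \otimes_{\Fp} F$ and apply Proposition~\ref{prop: ext zs} after checking that $\Fp G/\radical{\Fp G}$ is separable. The only difference is that you prove the separability directly from the perfectness of $\Fp$ (via Wedderburn's little theorem), whereas the paper simply cites \cite[Lemma~3.1.28]{NT89} for this fact.
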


\begin{proof}
    Since \( \Fp G/\radical{\Fp G} \) is separable \cite[Lemma~3.1.28]{NT89},
    the claim follows from Proposition~\ref{prop: ext zs}.
\end{proof}

It would be interesting to study the following problem.

\begin{prob}[See also Table~\ref{tbl: known}]
    \label{prob}
    Let \( G \) be a finite group and \( p \) a prime divider of \order{G}.
    Construct bases and describe the dimensions of \zs{n}{\Fp G} for all \( n \in \nat \).
\end{prob}

By Lemma~\ref{lem: Loewy length},
a satisfactory answer to this problem
yields an answer to the Brauer Problem~15 \cite{Bra63}
--- a group-theoretic description of the Loewy length \loewy{FG}.

\begin{rmk}
    Let \( A \) be a block of a finite group algebra over a splitting field and
    \( \{ e_i \} \) a complete set of orthogonal primitive idempotents of \( A \).
    Recently Otokita \cite{Oto16} proved an upper bound
    \begin{equation*}
        \dim \zs{n}{A} \le \sum_i \dim e_i A e_i/e_i \rad{n}{A} e_i
    \end{equation*}
    for every \( n \in \nat \).
\end{rmk}

\section{Jennings theory}

As we proved \zs{n}{A} are Morita invariaits,
we want to determine these for special cases.
Taking Problem~\ref{prob} into account, we hereafter study the group algebra \( FG \)
of a finite \( p \)-group \( G \) over a field \( F \) of positive characteristic \( p \).
In this section, we collect some results of the Jennings theory
for the reader's convenience and to fix our notations.

\begin{dfn}
    \label{dfn: dimension subgroup}
    For \( i \in \mathbb{N} \) we define the \( i \)th \emph{dimension subgroup} (or \emph{Jennigs subgroup}) of \( G \) by
    \begin{equation*}
        D_i := \set{ g \in G \given g - 1 \in \rad{i}{FG} }.
    \end{equation*}
\end{dfn}

\begin{rmk}
    Although the dimension subgroups are defined ring-theoretically,
    these can be computed group-theoretically
    by Theorem~\ref{thm: Jennings-Brauer}\ref{item: M-series}.
\end{rmk}

\begin{lem}
    \label{lem: dimension subgroup}
    \noindent
    \begin{enumerate}[label={\upshape(\roman*)}]
        \item Dimension subgroups are a descending series of characteristic subgroups.
            \label{item: characteristic}
        \item Every successive quotient of the dimension subgroups is an elementary abelian \( p \)-group.
    \end{enumerate}
\end{lem}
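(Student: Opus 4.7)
The plan is to translate each group-theoretic claim about $D_i$ into a ring-theoretic one about $\rad{i}{FG}$, exploiting that each radical power is a two-sided ideal preserved by every $F$-algebra automorphism.

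For \ref{item: characteristic}, closure of $D_i$ under products and inverses would follow from the identities
\[
gh - 1 = (g-1)h + (h-1), \qquad g^{-1} - 1 = -g^{-1}(g-1),
\]
which place both left-hand sides inside $\rad{i}{FG}$ by the ideal property; the inclusion $D_{i+1} \subseteq D_i$ is immediate from $\rad{i+1}{FG} \subseteq \rad{i}{FG}$. For the characteristic property I would invoke the fact that every $\sigma \in \operatorname{Aut}(G)$ extends to an $F$-algebra automorphism of $FG$, which must fix $\radical{FG}$ setwise and hence all of its powers, so $\sigma(g) - 1 = \sigma(g-1) \in \rad{i}{FG}$ whenever $g \in D_i$.

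For (ii), the key input is the commutator identity $(g-1)(h-1) - (h-1)(g-1) = gh - hg$, from which $[g,h] - 1 = g^{-1}h^{-1}\bigl((g-1)(h-1) - (h-1)(g-1)\bigr) \in \rad{2i}{FG}$ whenever $g, h \in D_i$; since $2i \ge i+1$ for $i \ge 1$, this gives commutativity of $D_i/D_{i+1}$. For the $p$-torsion, in characteristic $p$ the binomial theorem collapses to $(g-1)^p = g^p - 1$ (using that $g$ commutes with $-1$ and that $(-1)^p = -1$ in $F$), so $g \in D_i$ yields $g^p - 1 \in \rad{pi}{FG} \subseteq \rad{i+1}{FG}$, i.e., $g^p \in D_{i+1}$. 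The degenerate case $i = 0$ is handled by $D_0 = D_1 = G$.

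I do not anticipate a genuine obstacle: the whole argument is essentially a dictionary between statements about $G$ and those about $FG$. The only point worth double-checking is the Freshman's dream $(g-1)^p = g^p - 1$, which needs both $\binom{p}{k} \equiv 0 \pmod p$ for $0 < k < p$ and the observation that $(-1)^p = -1$ in $F$ uniformly in $p$ (true in odd characteristic trivially, and in characteristic two because $-1 = 1$).
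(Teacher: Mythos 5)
Your proof is correct. The paper itself offers no argument here --- it simply points to Jennings \cite[pp.~177--178]{Jen41} --- so your self-contained verification is a genuinely different (and more informative) route. The dictionary you set up is the standard one and every step checks out: the identities $gh-1=(g-1)h+(h-1)$ and $g^{-1}-1=-g^{-1}(g-1)$ give the subgroup property, invariance of $\radical{FG}$ (hence of its powers) under the algebra automorphisms induced by $\operatorname{Aut}(G)$ gives characteristicity (which also supplies the normality of $D_{i+1}$ in $D_i$ needed to even form the quotient), and the computations $[g,h]-1=g^{-1}h^{-1}\bigl((g-1)(h-1)-(h-1)(g-1)\bigr)\in\rad{2i}{FG}$ and $g^p-1=(g-1)^p\in\rad{pi}{FG}$ land in $\rad{i+1}{FG}$ precisely because $2i\ge i+1$ and $pi\ge i+1$ for $i\ge 1$, a restriction you correctly flag and dispose of via $D_0=D_1=G$. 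Your double-check of $(-1)^p=-1$ in all characteristics is the right pedantic point to worry about and is handled correctly. The only contextual remark worth adding is that Jennings's own treatment goes the other way around --- he works with the group-theoretically defined $M$-series (cf.\ Theorem~\ref{thm: Jennings-Brauer}\ref{item: M-series}) and identifies it with the $D_i$ --- whereas your argument reads the properties directly off the ring-theoretic definition~\ref{dfn: dimension subgroup}; for the purposes of this lemma your route is shorter and requires none of that identification.
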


\begin{proof}
    See \cite[177--178]{Jen41}.
\end{proof}

We use the following notations throughout the paper.

\begin{nts}
    \label{nts: Jennings}
    Let \( D_i \) be the dimension subgroups of \( G \).
    Set \( t := \min\set{ i \in \nat \given D_i = 1 }  \).
    Then we fix elements \( g_{i1}, \dotsc, g_{ir_i} \in D_i \) such that
    \( \set{ \gij D_{i+1} \given 1 \le j \le r_i } \) form a minimal generating set of
    \( D_i/D_{i+1} \) for every integer \( 1 \le i < t \) satisfying \( D_i > D_{i+1} \).
    %% i.e. D_i/D_i+1 =  <g_i1> x ... x <g_ir_i> and r_i = p-rank(D_i/D_i+1)
    %% (cf. the Burnside basis theorem)
    We write \( \ordprod \) for the product taken in lexicographic order with respect to indices.
\end{nts}

\begin{thm}[Jennings]
    \label{thm: Jennings}
    For every integer \( n \ge 0 \) we have
    \begin{equation*}
        \label{eq: Jennings}
        \rad{n}{FG}
        =
        \bigoplus F \ordprodsub{1 \le i < t\\ 1 \le j \le r_i} (\gij - 1)^\mij
    \end{equation*}
    where the direct sum is taken for all integers \( 0 \le \mij < p \) satisfying
    \begin{equation*}
        \sumsub{1 \le i < t\\ 1 \le j \le r_i} i \mij \ge n.
    \end{equation*}
\end{thm}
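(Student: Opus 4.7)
The plan is to prove \( \rad{n}{FG} = V_n \), writing \( V_n \) for the claimed right-hand side, by first establishing the case \( n = 0 \) that the full family of ordered products is a basis of \( FG \), then separating this basis by the weight \( w(\mu) := \sum_{i,j} i m_{ij} \).

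For the easy inclusion \( V_n \subseteq \rad{n}{FG} \) the conclusion is immediate: by Definition~\ref{dfn: dimension subgroup}, \( \gij - 1 \in \rad{i}{FG} \), so \( \ordprod (\gij - 1)^{\mij} \in \rad{w(\mu)}{FG} \subseteq \rad{n}{FG} \) whenever \( w(\mu) \geq n \). For the case \( n = 0 \), I would show that the full family \( \{\ordprod (\gij - 1)^{\mij} : 0 \le \mij < p\} \) is a basis of \( FG \). By Lemma~\ref{lem: dimension subgroup} and the choice of generators, every \( g \in G \) admits a unique ordered factorization \( g = \ordprod \gij^{k_{ij}} \) with \( 0 \le k_{ij} < p \), obtained by successive reduction through the elementary abelian quotients \( D_i/D_{i+1} \). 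Expanding \( \gij^{k_{ij}} = (1 + (\gij - 1))^{k_{ij}} \) binomially and distributing preserves the lex order, since powers of a single generator commute among themselves, so each \( g \) lies in the span of our products; the cardinality is \( \prod_{i,j} p = \prod_i p^{r_i} = \order{G} = \dim FG \), so spanning upgrades to being a basis.

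For general \( n \), I would pass to the associated graded algebra \( \operatorname{gr}(FG) := \bigoplus_k \rad{k}{FG}/\rad{k+1}{FG} \), placing \( \bar{x}_{ij} := (\gij - 1) + \rad{i+1}{FG} \) in degree \( i \). Using standard properties of the dimension subgroups---the commutator bound \( [D_i, D_k] \subseteq D_{i+k} \) and the \( p \)-th-power bound \( D_i^p \subseteq D_{ip} \)---one rewrites the commutators \( [\bar{x}_{ij}, \bar{x}_{kl}] \) and the \( p \)-th powers \( \bar{x}_{ij}^p \) as linear combinations of the \( \bar{x}_{mn} \) in the appropriate higher degrees. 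This reduces arbitrary monomials in \( \operatorname{gr}(FG) \) to ordered form with every exponent below \( p \), so these ordered monomials span \( \operatorname{gr}(FG) \); the total dimension \( \order{G} \) from the \( n = 0 \) step then forces them to be a basis, and in particular the weight-\( n \) ordered monomials give a basis of \( \rad{n}{FG}/\rad{n+1}{FG} \). A leading-weight argument now completes the proof: if some \( r \in \rad{n}{FG} \) had a nonzero coefficient in its Jennings expansion on a product of minimal weight \( w < n \), then subtracting the higher-weight part (which already lies in \( \rad{w+1}{FG} \) by the easy inclusion) would give a nontrivial linear combination of weight-\( w \) monomials lying in \( \rad{w+1}{FG} \), contradicting their linear independence in \( \rad{w}{FG}/\rad{w+1}{FG} \).

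The principal obstacle is the spanning step for \( \operatorname{gr}(FG) \): because the graded algebra need not be commutative and \( \bar{x}_{ij}^p \) need not vanish---it lands in degree \( ip \) rather than in \( \rad{ip+1}{FG} \)---the reduction of monomials to ordered form with exponents below \( p \) requires the full restricted-Lie-algebra structure associated with the dimension series, and not merely the defining inclusion \( \gij - 1 \in \rad{i}{FG} \).
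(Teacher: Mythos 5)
The paper offers no proof of this theorem to compare against: it simply cites Jennings' original article. Your outline is the standard modern route (essentially the Quillen-style treatment one finds in Benson or in Dixon--du\,Sautoy--Mann--Segal): the easy inclusion from \( \gij - 1 \in \rad{i}{FG} \); the \( n = 0 \) case via the unique ordered factorization \( g = \ordprod \gij^{k_{ij}} \) through the elementary abelian quotients, whose binomial expansion is unitriangular in the lex order; and the hard inclusion via the associated graded algebra plus the leading-weight argument. The logic of the final step is sound: linear independence of the weight-\( w \) monomials modulo \( \rad{w+1}{FG} \) does kill any minimal-weight term of weight \( w < n \). The one place where you have named the difficulty rather than resolved it is the spanning of \( \operatorname{gr}(FG) \) by ordered monomials with exponents below \( p \). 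To close it you need (a) that \( \operatorname{gr}(FG) \) is generated in degree one, which follows from \( \rad{n}{FG} = \radical{FG}^{n} \) together with \( g - 1 \equiv \sum_j k_j (g_{1j} - 1) \bmod \rad{2}{FG} \); and (b) the inclusions \( [D_i, D_k] \le D_{i+k} \) and \( (D_i)^p \le D_{ip} \), which turn \( \bar{x}_{ij}\bar{x}_{kl} - \bar{x}_{kl}\bar{x}_{ij} \) (via \( xy - yx = hg([g,h]-1) \) for \( x = g-1 \), \( y = h-1 \)) and \( \bar{x}_{ij}^{\,p} \) (via \( (g-1)^p = g^p - 1 \) in characteristic \( p \)) into combinations of generators in degrees \( i+k \) and \( ip \). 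These inclusions are exactly the Lazard--Jennings characterization of the dimension series (cf.\ Theorem~\ref{thm: Jennings-Brauer}\ref{item: M-series}), so they are available but not free; a complete write-up must prove or cite them, and must also note that the rewriting terminates by downward induction on degree, using nilpotency of the augmentation ideal. With those points supplied your argument is a complete proof, and arguably more transparent than Jennings' original combinatorial induction.
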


\begin{proof}
    See \cite[Theorem~3.2]{Jen41}.
\end{proof}

\begin{thm}[Jennings, Brauer]
    \label{thm: Jennings-Brauer}
    \noindent
    \begin{enumerate}[label={\upshape(\roman*)}]
        \item The Loewy length \loewy{FG} of \( FG \) equals \( \displaystyle 1 + (p - 1)\sum_{1 \le i < t} ir_i \).
        \item \( FG \) is rigid\textup{:}\\ \( \soc{n}{FG} = \rad{\loewy{FG} - n}{FG} \) for every integer \( 0 \le n \le \loewy{FG} \).
            \label{item: rigid}
        \item \( \displaystyle \soc{n}{FG} = \bigoplus F\ordprodsub{1 \le i < t\\ 1 \le j \le r_i} (\gij - 1)^\mij \) for every integer \( n \ge 0 \)
            where the direct sum is taken for all integers \( 0 \le \mij < p \) satisfying \( \displaystyle \sumsub{1 \le i < t\\ 1 \le j \le r_i} i(p - 1 - \mij) < n \).
            \label{item: soc}
        \item \( D_1 = G \) and \( D_i = (D_{\lceil i/p \rceil})^p[D_{i-1}, G] \) for every integer \( i > 1 \).
            \label{item: M-series}
    \end{enumerate}
\end{thm}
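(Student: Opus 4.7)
The plan is to derive parts (i)--(iii) as quick consequences of Jennings' basis (Theorem~\ref{thm: Jennings}) combined with the symmetric-algebra structure of $FG$, and to handle (iv) via the associated graded ring $\bigoplus \rad{i}{FG}/\rad{i+1}{FG}$. Part (i) is immediate from Theorem~\ref{thm: Jennings}: $\rad{n}{FG} = 0$ exactly when no tuple $(\mij)$ with $0 \le \mij < p$ satisfies $\sum i \mij \ge n$, and the maximum of $\sum i \mij$ is $M := (p - 1) \sum_{1 \le i < t} i r_i$, so $\loewy{FG} = M + 1$.

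For (ii), the essential input is that $FG$ is a symmetric algebra. The Nakayama form yields, for each $n$, an equality $\soc{n}{FG} = \rad{n}{FG}^{\perp}$, whence $\dim \soc{n}{FG} = \order{G} - \dim \rad{n}{FG}$. On the other hand, the involution $\mij \mapsto p - 1 - \mij$ on Jennings' basis sends $\sum i \mij$ to $M - \sum i \mij$, giving $\dim \rad{n}{FG} + \dim \rad{\loewy{FG} - n}{FG} = \order{G}$. Combining these, $\dim \soc{n}{FG} = \dim \rad{\loewy{FG} - n}{FG}$, and since $\rad{\loewy{FG} - n}{FG} \cdot \rad{n}{FG} = 0$ forces $\rad{\loewy{FG} - n}{FG} \subseteq \soc{n}{FG}$, rigidity follows by dimension. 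For (iii), applying the same involution to the basis of $\rad{\loewy{FG} - n}{FG}$ supplied by Theorem~\ref{thm: Jennings} transforms the constraint $\sum i \mij \ge \loewy{FG} - n$ into $\sum i(p - 1 - \mij) < n$, which is exactly the stated basis of $\soc{n}{FG}$.

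For (iv), $D_1 = G$ is immediate since $g - 1$ lies in the augmentation ideal $\radical{FG}$ for every element of a $p$-group. The inclusion $(D_{\lceil i/p \rceil})^p [D_{i-1}, G] \subseteq D_i$ follows from the identities $[x, y] - 1 = (yx)^{-1} \bigl( (x - 1)(y - 1) - (y - 1)(x - 1) \bigr)$ and $(x - 1)^p = x^p - 1$ (valid in characteristic $p$), both of which upgrade radical degrees in the desired manner. The reverse inclusion is the main obstacle: one identifies the associated graded ring $\bigoplus \rad{i}{FG}/\rad{i+1}{FG}$ with the restricted universal enveloping algebra of the restricted Lie algebra $\bigoplus D_i/D_{i+1}$ (bracket from group commutators, $p$-operation from group $p$-th powers) and then invokes the Poincar\'e--Birkhoff--Witt theorem for such algebras. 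This identification is the heart of the Jennings--Lazard theory as developed in \cite{Jen41}.
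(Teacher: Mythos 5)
Your proposal is correct and follows essentially the same route as the paper, whose proof of this theorem consists of citations to Jennings and Benson: your arguments for (i)--(iii) (maximal weight of the Jennings monomials, the symmetric-algebra duality $\soc{n}{FG}=\rad{n}{FG}^{\perp}$ combined with the palindromic weight distribution, and deriving (iii) from (ii) plus Theorem~\ref{thm: Jennings}) are exactly the standard arguments underlying those citations, with (iii) matching the paper's own derivation verbatim. For (iv) you prove the easy inclusion and, like the paper, defer the hard direction (the PBW/restricted-enveloping-algebra identification) to \cite{Jen41}, which is consistent with the paper's level of detail.
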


\begin{proof}
    \noindent
    \begin{enumerate}[label={\upshape(\roman*)}]
        \item See \cite[Theorem~3.7]{Jen41}.
        \item See \cite[Proof of Corollary~3.14.7]{Ben91}.
        \item The proof follows from part \ref{item: rigid} and Theorem~\ref{thm: Jennings}.
        \item See \cite[Theorem~5.5]{Jen41}.
        \qedhere
    \end{enumerate}
\end{proof}

\begin{dfn}
    \label{dfn: Jennings basis}
    The basis
    \begin{equation*}
        \set[\Bigg]{ \ordprodsub{1 \le i < t\\ 1 \le j \le r_i} (\gij - 1)^\mij \given 0 \le \mij < p }
    \end{equation*}
    of \( FG \) is said to be the \emph{Jennings basis}.
\end{dfn}

\section{Main theorems}
\label{sec: main}

Recall Notation~\ref{nts: Jennings} in the following.

\begin{thm}
    \label{thm: main}
    Suppose \( s \in \nat \) satisfies \( D_s \ge [G, G] \).
    Then an element of the Jennings basis of the form
    \begin{equation*}
        \ordprod_{\substack{1 \le i < s\\ 1 \le j \le r_i}} (\gij - 1)^\mij
        \ordprod_{\substack{s \le i < t\\ 1 \le j \le r_i}} (\gij - 1)^{p-1}
    \end{equation*}
    are central for every integers \( 0 \le \mij < p \).
    In particular, for every \( n \in \nat \), we have
    \begin{equation}
        \label{eq: main2}
        \zs{n}{FG}
        \supseteq
        \bigoplus F
            \ordprodsub{1 \le i < s\\ 1 \le j \le r_i} (\gij - 1)^\mij
            \ordprodsub{s \le i < t\\ 1 \le j \le r_i} (\gij - 1)^{p-1}
    \end{equation}
    where the direct sum is taken for all integers \( 0 \le \mij < p \) satisfying
    \begin{equation*}
        \sumsub{1 \le i < s\\ 1 \le j \le r_i} i(p - 1 - \mij) < n.
    \end{equation*}
\end{thm}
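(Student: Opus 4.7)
The plan is to prove the centrality assertion first; the inclusion \eqref{eq: main2} then follows at once, because the condition \( \sum_{i<s} i(p-1-m_{ij}) < n \) is exactly the membership criterion of Theorem~\ref{thm: Jennings-Brauer}\ref{item: soc} applied to the Jennings element in question (the \( i \ge s \) factors have exponent \( p-1 \) and so contribute nothing to that sum).

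The key observation I would use is to identify the ``tail'' product
\begin{equation*}
    w := \ordprod_{\substack{s \le i < t\\ 1 \le j \le r_i}}(g_{ij}-1)^{p-1}
\end{equation*}
with the class sum \( \widehat{D_s} := \sum_{h \in D_s} h \). This rests on two elementary facts. First, in characteristic \( p \) one has \( (g-1)^{p-1} = 1 + g + g^2 + \dotsb + g^{p-1} \), since \( \binom{p-1}{k} \equiv (-1)^k \pmod p \). Second, by Lemma~\ref{lem: dimension subgroup} together with Notation~\ref{nts: Jennings}, the elements \( g_{ij} \) with \( i \ge s \) form a polycyclic generating sequence of \( D_s \): every element of \( D_s \) has a unique expression \( \ordprod g_{ij}^{m_{ij}} \) with \( 0 \le m_{ij} < p \) (by counting, using \( \lvert D_s\rvert = p^{\sum_{i \ge s} r_i} \), and iteratively lifting from \( D_{i+1} \) to \( D_i \) via the elementary-abelian quotients). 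Distributing \( \ordprod_{i \ge s,\, j}(1 + g_{ij} + \dotsb + g_{ij}^{p-1}) \) then enumerates \( D_s \) exactly once, giving \( w = \widehat{D_s} \). Since \( D_s \) is characteristic in \( G \) (Lemma~\ref{lem: dimension subgroup}\ref{item: characteristic}), \( \widehat{D_s} \) is central in \( FG \), and \( (h-1)\widehat{D_s} = 0 \) for every \( h \in D_s \).

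With these in hand the rest is short. Write \( x = y \cdot w \) with \( y := \ordprod_{i<s,\, j}(g_{ij}-1)^{m_{ij}} \); centrality of \( w \) gives \( gxg^{-1} - x = (gyg^{-1} - y)\,w \) for every \( g \in G \). From \( gg_{ij}g^{-1} = [g, g_{ij}]\,g_{ij} \) one obtains
\begin{equation*}
    g(g_{ij}-1)g^{-1} = (g_{ij}-1) + ([g, g_{ij}] - 1)\,g_{ij},
\end{equation*}
so \( gyg^{-1} - y \) expands into a sum of terms each containing at least one factor \( ([g, g_{ij}] - 1)\,g_{ij} \). The hypothesis \( D_s \ge [G, G] \) forces \( [g, g_{ij}] \in D_s \); using the centrality of \( w \) to slide it across \( g_{ij} \), each such term reduces to a multiple of \( ([g, g_{ij}] - 1)\,\widehat{D_s} = 0 \). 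Hence \( gxg^{-1} = x \), i.e., \( x \in \z{FG} \).

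The main (and essentially only) obstacle is the identification \( w = \widehat{D_s} \); once that is secured, the cancellation collapses to the one-line annihilation \( (h-1)\widehat{D_s} = 0 \), and no delicate filtration argument is needed.
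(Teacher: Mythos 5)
Your proposal is correct and follows essentially the same route as the paper: the crux in both is the identification of the tail product \( \ordprod_{s \le i < t,\, 1 \le j \le r_i}(\gij-1)^{p-1} \) with the subgroup sum \( D_s^+ \) (the paper's Lemma~\ref{lem: divisors trick}), combined with the hypothesis \( D_s \ge [G,G] \) and the socle criterion of Theorem~\ref{thm: Jennings-Brauer}\ref{item: soc}. The only cosmetic difference is that you verify centrality of \( y \cdot D_s^+ \) by an explicit commutator expansion, whereas the paper observes directly that the ideal \( FG \cdot N^+ \) is spanned by the conjugation-invariant coset sums \( hN^+ \) and is therefore central (Lemma~\ref{lem: central ideal}).
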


\begin{rmk}
    Note \( D_2 \ge [G, G] \).
\end{rmk}

\begin{rmk}
    From \eqref{eq: main2}, we have a lower bound
    \begin{equation*}
        \dim \zs{n_s}{FG}
        \ge
        \order{G/D_s}
    \end{equation*}
    where \( \displaystyle n_s := 1 + (p - 1)\sum_{1 \le i < s} ir_i \).
\end{rmk}

See Section~\ref{sec: examples} for concrete examples.
We can show that equality holds in \eqref{eq: main2} for the following class of \( p \)-groups.

\begin{dfn}[Lubotzky-Mann \cite{LM87}]
    A finite \( p \)-group \( G \) is said to be \emph{powerful}
    if \( G^p \ge [G, G] \) and \( p > 2 \),
    or \( G^4 \ge [G, G] \) and \( p = 2 \).
\end{dfn}

\begin{thm}
    \label{thm: powerful}
    If \( G \) is powerful then, for every integer \( 1 \le n \le p \), we have
    \begin{equation*}
        \zs{n}{FG}
        =
        \bigoplus F
            \ordprod_                {1 \le j \le r_1} (g_{1j} - 1)^{m_{1j}}
            \ordprodsub{2 \le i < t\\ 1 \le j \le r_i} (\gij - 1)  ^{p-1}
    \end{equation*}
    where the direct sum is taken for all integers \( 0 \le m_{1j} < p \) satisfying
    \begin{equation*}
        \sum_{1 \le j \le r_1} (p - 1 - m_{1j}) < n.
    \end{equation*}
\end{thm}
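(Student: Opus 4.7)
The plan is to derive the inclusion \( \supseteq \) from Theorem~\ref{thm: main} and to establish the reverse inclusion by showing that, for \( n \le p \), the displayed direct sum already exhausts \( \soc{n}{FG} \); since \( \zs{n}{FG} \subseteq \soc{n}{FG} \) always, this forces equality throughout. The inclusion \( \supseteq \) is immediate from Theorem~\ref{thm: main} applied with \( s = 2 \), which is permissible because \( D_2 \supseteq [G, G] \) always holds.

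The key structural input I would establish next is the following stabilization of the dimension subgroups: for powerful \( G \) with \( p > 2 \), one has \( D_2 = D_3 = \dotsb = D_p = G^p \). I would argue by induction on \( i \). The base case \( D_2 = G^p \) follows from Theorem~\ref{thm: Jennings-Brauer}\ref{item: M-series} together with the defining containment \( [G, G] \subseteq G^p \). For the inductive step, if \( D_i = G^p \) for some \( 2 \le i \le p - 1 \), then \( \lceil (i+1)/p \rceil = 1 \), so
\begin{equation*}
    D_{i+1} = D_1^p [D_i, G] = G^p [G^p, G],
\end{equation*}
and powerfulness gives \( [G^p, G] \subseteq [G, G] \subseteq G^p \), yielding \( D_{i+1} = G^p \). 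Consequently \( r_i = 0 \) for every \( 2 \le i \le p - 1 \) (a vacuous range when \( p = 2 \)), so every Jennings generator \( \gij \) has either \( i = 1 \) or \( i \ge p \).

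With this in hand, I would read off a basis of \( \soc{n}{FG} \) from Theorem~\ref{thm: Jennings-Brauer}\ref{item: soc}, namely the Jennings basis elements indexed by integers \( 0 \le \mij < p \) satisfying \( \sum i(p - 1 - \mij) < n \). For \( n \le p \), suppose some pair \( (i, j) \) with \( i \ge 2 \) had \( \mij < p - 1 \). By the preceding stabilization that index must satisfy \( i \ge p \), so its single contribution is already \( i(p - 1 - \mij) \ge p \ge n \), contradicting the sum constraint. Therefore \( \mij = p - 1 \) for all \( (i, j) \) with \( i \ge 2 \), and the surviving condition \( \sum_j (p - 1 - m_{1j}) < n \) exactly matches the indexing of the right-hand side. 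Combining gives \( \soc{n}{FG} \subseteq \zs{n}{FG} \subseteq \soc{n}{FG} \), as required.

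The main obstacle is the stabilization \( D_2 = \dotsb = D_p \); once it is secured, the remainder reduces to parsing the indexing condition in Theorem~\ref{thm: Jennings-Brauer}\ref{item: soc}. That stabilization rests on the defining property \( [G, G] \subseteq G^p \) of powerful \( p \)-groups combined with the elementary commutator containment \( [G^p, G] \subseteq [G, G] \); its role here is precisely to forbid Jennings generators at the intermediate dimensions \( 2 \le i \le p - 1 \), which is what makes the socle condition so restrictive for \( n \le p \).
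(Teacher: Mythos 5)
Your proposal is correct and follows essentially the same route as the paper: the inclusion \( \supseteq \) via Theorem~\ref{thm: main} (with \( s = 2 \)), and the reverse inclusion by showing \( \soc{n}{FG} \) itself is spanned by the stated Jennings basis elements, using the stabilization \( D_2 = \dotsb = D_p \) (the paper's Lemma~\ref{lem: powerful}) to force \( \mij = p-1 \) whenever \( i \ge 2 \). Your explicit inductive verification of that stabilization merely fills in a step the paper leaves terse, and your closing chain \( \soc{n}{FG} \subseteq \zs{n}{FG} \subseteq \soc{n}{FG} \) matches how the paper also deduces Corollary~\ref{cor: soc p}.
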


\begin{cor}
    \label{cor: soc p}
    If \( G \) is powerful then \( \soc{p}{FG} \subseteq \z{FG} \).
\end{cor}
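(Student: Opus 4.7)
The plan is to prove the stronger equality \(\soc{p}{FG} = \zs{p}{FG}\); since \(\zs{p}{FG} \subseteq \z{FG}\) by definition, the corollary then follows immediately. Because \(\zs{p}{FG} \subseteq \soc{p}{FG}\) always holds, only the reverse inclusion \(\soc{p}{FG} \subseteq \zs{p}{FG}\) needs work.

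Both sides are spanned by explicit subsets of the Jennings basis. By Theorem~\ref{thm: Jennings-Brauer}\ref{item: soc}, a Jennings basis element \(\ordprod (\gij - 1)^\mij\) lies in \(\soc{p}{FG}\) exactly when \(\sum_{i,j} i(p - 1 - \mij) < p\); by Theorem~\ref{thm: powerful} applied with \(n = p\), it is among the spanning set for \(\zs{p}{FG}\) exactly when \(\mij = p - 1\) for every \(i \ge 2\) and \(\sum_{j} (p - 1 - m_{1j}) < p\). Since the Jennings basis is linearly independent, it suffices to show the first condition implies the second.

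To that end I would fix such a Jennings basis element of \(\soc{p}{FG}\) and suppose toward a contradiction that \(m_{i_0 j_0} < p - 1\) for some \((i_0, j_0)\) with \(i_0 \ge 2\). The single summand \(i_0(p - 1 - m_{i_0 j_0}) \ge i_0\) already forces \(\sum_{i,j} i(p - 1 - \mij) \ge p\) provided \(i_0 \ge p\). The crux is therefore the group-theoretic claim that, for a powerful \(p\)-group \(G\), one has \(D_i = G^p\) for every \(2 \le i \le p\); this forces \(r_i = 0\) for \(2 \le i < p\), so any index \(i_0 \ge 2\) actually appearing as a subscript of some \(\gij\) automatically satisfies \(i_0 \ge p\).

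I would establish this claim by induction using Theorem~\ref{thm: Jennings-Brauer}\ref{item: M-series}: the base case \(D_2 = G^p [G, G] = G^p\) uses powerfulness directly (for \(p = 2\), via \([G, G] \le G^4 \le G^2\)), and for \(3 \le i \le p\) we have \(\lceil i/p \rceil = 1\), so \(D_i = G^p [D_{i-1}, G] = G^p [G^p, G] = G^p\) by the induction hypothesis together with the normality of \(G^p\) in \(G\). Once the claim is in place, the contradiction goes through, \(\mij = p - 1\) holds for every \(i \ge 2\), and the remaining condition matches Theorem~\ref{thm: powerful} verbatim. The main obstacle is this dimension-subgroup computation; everything else is bookkeeping between two subsets of the Jennings basis.
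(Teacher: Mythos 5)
Your proposal is correct and takes essentially the same route as the paper: your dimension-subgroup computation \( D_2 = \dotsb = D_p = G^p \) is the paper's Lemma~\ref{lem: powerful}, your bookkeeping via Theorem~\ref{thm: Jennings-Brauer}\ref{item: soc} reproduces the claim~\eqref{eq: claim} established inside the proof of Theorem~\ref{thm: powerful}, and the centrality of the surviving Jennings basis elements is exactly Theorem~\ref{thm: main}, which you access indirectly through Theorem~\ref{thm: powerful}. The paper's own proof simply cites \eqref{eq: claim} together with Theorem~\ref{thm: main}, so the only cosmetic difference is that you restate the conclusion as the equality \( \soc{p}{FG} = \zs{p}{FG} \).
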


\begin{rmk}
    Even if \( G \) is powerful, the assertion \( \soc{p+1}{FG} \subseteq \z{FG} \) is false.
    See Subsection~\ref{sec: exponent p^2}.
\end{rmk}

\begin{rmk}
    \label{rmk: Mueller}
    Let \( A \) be a finite-dimensional symmetric algebra over a field.
    In general, it is known that \( \soc{2}{A} \subseteq \z{A} \) if \( A \) is split-local.
    This can be traced back to M\"uller \cite[Proof of Lemma~2]{Mue74}.
    (For a simple proof see, for example, \cite[Lemma~2.2]{BKL17}.)
\end{rmk}

\begin{rmk}
    \label{rmk: corner cases}
    Note that \zs{n}{FG} can be described explicitly for \( n = 1, 2 \).
    \begin{align*}
        \zs{1}{FG}
        &= F \ordprodsub{1 \le i < t\\ 1 \le j \le r_i} (\gij - 1)^{p-1}
        \\
        \zs{2}{FG}
        &= F \ordprodsub{1 \le i < t\\ 1 \le j \le r_i} (\gij - 1)^{p-1}
        \oplus
        \bigoplus_{\mathclap{1 \le s \le r_1}} F
            \ordprod_                {1 \le j \le r_1} (g_{1j} - 1)^{p-1-\delta_{js}}
            \ordprodsub{2 \le i < t\\ 1 \le j \le r_i} (\gij   - 1)^{p-1}
    \end{align*}
    This is due to the Jennings theory and Remark~\ref{rmk: Mueller}.
\end{rmk}

\begin{rmk}
    It is, of course, not true that \zs{n}{FG} is always spanned by a subset of the Jennings basis;
    Any 2-group of maximal class of order 16 yields a minimal counterexample \cite{GAP4, Sak17}.
\end{rmk}

\section{Proofs}
\label{sec: proofs}

\subsection{Proofs of lemmas}

A certain ideal associated to a normal subgroup
plays an important role in the proof of the main theorems.
We prove its properties in the following.

\begin{lem}
    \label{lem: central ideal}
    Let \( G \) be a finite group,
    \( N \unlhd G \), and
    \( F \) a field.
    Let \( N^+ \) denote the sum of all elements of \( N \) in \( FG \).
    If \( N \ge [G, G] \) then
    \(
        FG \cdot N^+
        \subseteq
        \z{FG}
    \).
\end{lem}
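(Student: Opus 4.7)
The plan is to reduce the problem to two small observations about $N^+$: that $N^+$ is itself central in $FG$, and that multiplication by $N^+$ kills the commutator $gh-hg$ for any $g,h\in G$.

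First I would note that because $N$ is normal, conjugation by any $g\in G$ permutes the elements of $N$, so $gN^+g^{-1}=N^+$, i.e.\ $N^+\in \z{FG}$. Similarly, for any $n\in N$ we have $nN^+=N^+=N^+n$, since left- or right-multiplication by $n$ permutes $N$ itself.

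Next I would reduce the desired inclusion $FG\cdot N^+\subseteq \z{FG}$ to showing, for every $a\in FG$ and every $g\in G$, the equality $g\cdot aN^+ = aN^+\cdot g$. Using the centrality of $N^+$ already established, the right-hand side equals $agN^+$, so it is enough to prove $(ga-ag)N^+=0$. By $F$-linearity it suffices to treat the case $a=h$ for a single group element $h\in G$.

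The key step is then the commutator identity $gh = hg\cdot (g^{-1}h^{-1}gh)$. Since $N\ge [G,G]$, the factor $c:=g^{-1}h^{-1}gh$ lies in $N$, so
\begin{equation*}
    gh\cdot N^+ \;=\; hg\cdot c\cdot N^+ \;=\; hg\cdot N^+,
\end{equation*}
using $cN^+=N^+$ from the first paragraph. Hence $(gh-hg)N^+=0$, which completes the reduction and yields $aN^+\in \z{FG}$ for every $a\in FG$. There is no real obstacle here; the only thing to be careful about is keeping the two roles of $N^+$ straight (being central on the one hand, and absorbing elements of $N$ on the other), and to invoke $N\ge[G,G]$ at exactly the point where the commutator is pushed into $N$.
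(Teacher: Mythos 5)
Your argument is correct and is precisely the standard verification that the paper leaves implicit (its proof is just ``Clear''): $N^+$ is central and absorbs elements of $N$, and the commutator $g^{-1}h^{-1}gh$ lands in $N$ so that $(gh-hg)N^+=0$. Nothing further is needed.
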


\begin{proof}
    Clear.
\end{proof}

\begin{rmk}
    The \( N = [G, G] \) case of Lemma~\ref{lem: central ideal}
    can be found in \cite[Lemma~5]{Wal62}.
\end{rmk}

\begin{lem}
    \label{lem: divisors trick}
    \begin{equation*}
        \ordprodsub{s \le i < t\\ 1 \le j \le r_i} (\gij - 1)^{p-1}
        =
        D_s^+.
    \end{equation*}
\end{lem}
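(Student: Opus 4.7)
The plan is to expand each factor using the polynomial identity
\(
    (X-1)^{p-1} = 1 + X + \cdots + X^{p-1}
\)
in \( \Fp[X] \) (which follows at once from \( (X-1)^p = X^p - 1 \) together with the elementary factorization \( X^p - 1 = (X-1)(1 + X + \cdots + X^{p-1}) \) and cancellation in the integral domain \( \Fp[X] \)), and then to identify the resulting sum with \( D_s^+ \) via the polycyclic presentation of \( D_s \) afforded by our chosen generators.

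Substituting \( X = \gij \) yields \( (\gij - 1)^{p-1} = \sum_{k=0}^{p-1} g_{ij}^k \) in \( FG \) for every pair \( (i,j) \), independent of the order of \( \gij \) in \( G \). Distributing through the ordered product then gives
\[
    \ordprodsub{s \le i < t\\ 1 \le j \le r_i} (\gij - 1)^{p-1}
    =
    \sum \ordprodsub{s \le i < t\\ 1 \le j \le r_i} g_{ij}^{k_{ij}},
\]
where the outer sum ranges over all tuples \( (k_{ij}) \) with \( 0 \le k_{ij} < p \).

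The task therefore reduces to showing that the assignment \( (k_{ij}) \mapsto \ordprod_{i,j} g_{ij}^{k_{ij}} \) is a bijection from such tuples onto \( D_s \). For surjectivity, I would induct downward along the series \( D_s \supseteq D_{s+1} \supseteq \cdots \supseteq D_t = 1 \): by Notation~\ref{nts: Jennings} the cosets \( g_{sj} D_{s+1} \) generate the elementary abelian quotient \( D_s/D_{s+1} \), so every \( d \in D_s \) has the form \( \prod_j g_{sj}^{k_{sj}} \cdot d' \) with \( d' \in D_{s+1} \), and we recurse. Injectivity then follows by counting: the number of tuples is \( \prod_{s \le i < t} p^{r_i} = \prod_i \order{D_i/D_{i+1}} = \order{D_s} \). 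Consequently the expansion collapses to \( \sum_{d \in D_s} d = D_s^+ \). The only step requiring attention is this polycyclic presentation of \( D_s \), a standard consequence of Lemma~\ref{lem: dimension subgroup}; everything else is a transparent distributive calculation.
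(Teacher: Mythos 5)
Your proof is correct and follows essentially the same route as the paper's: both rest on the identity \( (\gij-1)^{p-1} = 1 + \gij + \dotsb + \gij^{p-1} \) in characteristic \( p \), followed by expanding the ordered product and recognizing that the ordered monomials \( \ordprod \gij^{k_{ij}} \) enumerate \( D_s \) exactly once. The only differences are cosmetic — you obtain the identity by cancelling \( X-1 \) in \( X^p-1 = (X-1)^p \) rather than by reducing the binomial coefficients \( \binom{p-1}{m} \equiv (-1)^m \pmod p \), and you make explicit the normal-form bijection with \( D_s \) that the paper's final equality leaves implicit.
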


\begin{proof}
    \begin{align*}
        \ordprodsub{s \le i < t\\ 1 \le j \le r_i} (\gij - 1)^{p-1}
        &=
        \ordprodsub{s \le i < t\\ 1 \le j \le r_i}
            \sum_{m=0}^{p-1} \binom{p-1}{m} (-1)^{p - 1 - m} \gij^m
        \\
        &=
        \ordprodsub{s \le i < t\\ 1 \le j \le r_i}
            \sum_{m=0}^{p-1} (-1)^m         (-1)^{p - 1 - m} \gij^m
        \\
        &=
        \ordprodsub{s \le i < t\\ 1 \le j \le r_i}
            \sum_{m=0}^{p-1} \gij^m
        =
        D_s^+.
        \qedhere
    \end{align*}
\end{proof}

\begin{lem}
    \label{lem: dimension ideal}
    \begin{equation*}
        FG \cdot D_s^+
        =
        \bigoplus
        F
        \ordprodsub{1 \le i < s\\ 1 \le j \le r_i} (\gij - 1)^\mij
        \ordprodsub{s \le i < t\\ 1 \le j \le r_i} (\gij - 1)^{p-1}
    \end{equation*}
    where the direct sum is taken for all integers \( 0 \le \mij < p \).
\end{lem}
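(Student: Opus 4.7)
The plan is to rewrite the right-hand side using Lemma~\ref{lem: divisors trick}, which produces the inclusion \( \supseteq \) at once, and then to upgrade this to equality by a dimension count.

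By Lemma~\ref{lem: divisors trick} each spanning element on the right-hand side equals
\[
    \ordprodsub{1 \le i < s \\ 1 \le j \le r_i}(\gij - 1)^{\mij} \cdot D_s^+,
\]
which visibly lies in \( FG \cdot D_s^+ \); this gives \( \supseteq \) and simultaneously exhibits a spanning set of the right-hand side of cardinality \( \prod_{i < s} p^{r_i} \). For linear independence I would observe that each such element is itself a Jennings basis element of \( FG \) (Definition~\ref{dfn: Jennings basis}) --- namely the one whose exponents agree with \( \mij \) for \( i < s \) and are forced to \( p-1 \) for \( i \ge s \). Distinct tuples \( (\mij)_{i < s} \) thus yield distinct Jennings basis elements of \( FG \), so the right-hand side has \( F \)-dimension exactly \( \prod_{i < s} p^{r_i} \), which by Notation~\ref{nts: Jennings} and Lemma~\ref{lem: dimension subgroup} equals \( \order{G/D_s} \).

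To close the gap it remains to check \( \dim FG \cdot D_s^+ \le \order{G/D_s} \). Since \( D_s \unlhd G \), we have \( h D_s^+ = D_s^+ \) for every \( h \in D_s \), so \( FG \cdot D_s^+ \) is \( F \)-spanned by the coset sums \( \set{ g D_s^+ \given gD_s \in G/D_s } \); these have pairwise disjoint supports in the group basis of \( FG \) and are therefore \( F \)-linearly independent, giving \( \dim FG \cdot D_s^+ = \order{G/D_s} \). Combined with \( \supseteq \) this forces equality. The whole argument is essentially a repackaging of Lemma~\ref{lem: divisors trick} together with the linear independence of the Jennings basis, and the only bookkeeping is the identification \( \prod_{i < s} p^{r_i} = \order{G/D_s} \); I do not anticipate any real obstacle.
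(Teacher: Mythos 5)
Your proposal is correct and follows essentially the same route as the paper: the inclusion \( \supseteq \) via Lemma~\ref{lem: divisors trick} combined with the dimension count \( \prod_{1 \le i < s} p^{r_i} = \order{G/D_s} = \dim FG \cdot D_s^+ \). You merely spell out two steps the paper leaves implicit (linear independence of the relevant Jennings basis elements, and the coset-sum argument for \( \dim FG \cdot D_s^+ \)), which is fine.
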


\begin{proof}
    First, note that the dimensions are equal because
    \begin{align*}
        &
        \dim \bigoplus
        F
        \ordprodsub{1 \le i < s\\ 1 \le j \le r_i} (\gij - 1)^\mij
        \ordprodsub{s \le i < t\\ 1 \le j \le r_i} (\gij - 1)^{p-1}
        \\
        &=
        \prod_{1 \le i < s} p^{r_i}
        =
        \prod_{1 \le i < s} \order{D_i/D_{i+1}}
        =
        \order{G/D_s}
        =
        \dim FG \cdot D_s^+.
    \end{align*}
    By Lemma~\ref{lem: divisors trick}, we have
    \begin{equation*}
        FG \cdot D_s^+
        \supseteq
        \bigoplus
        F
        \ordprodsub{1 \le i < s\\ 1 \le j \le r_i} (\gij - 1)^\mij
        \ordprodsub{s \le i < t\\ 1 \le j \le r_i} (\gij - 1)^{p-1},
    \end{equation*}
    and the proof is complete.
\end{proof}

\begin{lem}
    \label{lem: powerful}
    If \( G \) is powerful then \( D_2 = D_p \).
\end{lem}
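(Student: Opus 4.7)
The plan is to reduce the identity $D_2 = D_p$ to a computation using the recursive description of the dimension subgroups from Theorem~\ref{thm: Jennings-Brauer}\ref{item: M-series}, and then invoke the powerful hypothesis exactly once. The case $p = 2$ is vacuous, so I will assume $p > 2$ throughout.

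First, I would apply Theorem~\ref{thm: Jennings-Brauer}\ref{item: M-series} with $i = 2$. Since $\lceil 2/p \rceil = 1$ for every prime $p$, this gives
\begin{equation*}
    D_2 = (D_1)^p [D_1, G] = G^p [G, G].
\end{equation*}
Because $G$ is powerful and $p > 2$, we have $[G, G] \le G^p$, so this simplifies to $D_2 = G^p$.

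Next, I would apply the same formula with $i = p$. Again $\lceil p/p \rceil = 1$, so
\begin{equation*}
    D_p = (D_1)^p [D_{p-1}, G] = G^p [D_{p-1}, G].
\end{equation*}
The inclusion $D_{p-1} \le G$ yields $[D_{p-1}, G] \le [G, G]$, and the powerful hypothesis again gives $[G, G] \le G^p$. Hence the second factor is already absorbed by $G^p$, and $D_p = G^p$.

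Combining the two computations gives $D_2 = G^p = D_p$, as required. The argument is short, and the only place where anything non-formal happens is the single appeal to the powerful condition $G^p \ge [G, G]$; the main obstacle—if one can call it that—is simply checking that the ceiling $\lceil i/p \rceil$ lands on $1$ for both relevant values of $i$, so that the recursion opens up to a direct comparison with $G^p$.
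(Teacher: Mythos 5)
Your proof is correct and takes essentially the same route as the paper, which simply cites Theorem~\ref{thm: Jennings-Brauer}\ref{item: M-series} without spelling out the details; you supply exactly the intended computation ($D_2 = G^p[G,G] = G^p$ and $D_p = G^p[D_{p-1},G] = G^p$ via the powerful hypothesis), and your handling of the trivial case $p=2$ is fine.
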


\begin{proof}
    The proof follows from Theorem~\ref{thm: Jennings-Brauer}\ref{item: M-series}.
\end{proof}

\subsection{Proof of main theorems}

\begin{proof}[{Proof of Theorem~\textup{\ref{thm: main}}}]
    The first part follows from
    \begin{gather} \begin{aligned}
        \label{eq: central}
        \z{FG}
            &\supseteq
            FG \cdot D_s^+
                & & \text{(By Lemma~\ref{lem: central ideal}.)}
            \\
            &=
            \bigoplus F
            \ordprodsub{1 \le i < s\\ 1 \le j \le r_i} (\gij - 1)^\mij
            \ordprodsub{s \le i < t\\ 1 \le j \le r_i} (\gij - 1)^{p-1}
                & & \text{(By Lemma~\ref{lem: dimension ideal}.)}.
    \end{aligned} \end{gather}

    The second part follows from
    \eqref{eq: central} and Theorem~\ref{thm: Jennings-Brauer}\ref{item: soc}.
\end{proof}

\begin{proof}[{Proof of Theorem~\textup{\ref{thm: powerful}}}]
    By Theorem~\ref{thm: main}, it remains to prove
    \begin{equation*}
        \zs{n}{FG}
        \subseteq
        \bigoplus F
            \ordprod_                {1 \le j \le r_1} (g_{1j} - 1)^{m_{1j}}
            \ordprodsub{2 \le i < t\\ 1 \le j \le r_i} (\gij   - 1)^{p-1}.
    \end{equation*}
    We claim that
    \begin{equation}
        \label{eq: claim}
        \soc{n}{FG}
        \subseteq
        \bigoplus F
            \ordprod_                {1 \le j \le r_1} (g_{1j} - 1)^{m_{1j}}
            \ordprodsub{2 \le i < t\\ 1 \le j \le r_i} (\gij   - 1)^{p-1}
    \end{equation}
    where the direct sum is taken for all integers \( 0 \le m_{1j} < p \) satisfying
    \(
        \displaystyle
        \sum_{1 \le j \le r_1} (p - 1 - m_{1j}) < n
    \).
    Let
    \begin{equation*}
        z  = \ordprodsub{1 \le i < t\\ 1 \le j \le r_i} (\gij - 1)^\mij
    \end{equation*}
    be an element of the Jennings basis lying in \soc{n}{FG}.
    Then we have the following.
    \begin{gather*} \begin{aligned}
        p
        &\ge
        n
        & &
        \\
        &>
        \sumsub{1 \le i < s\\ 1 \le j \le r_i} i(p - 1 - \mij)
        & & \text{(By Theorem~\ref{thm: Jennings-Brauer}\ref{item: soc}.)}
        \\
        &=
        \sum_{1 \le j \le r_1} (p - 1 - m_{1j})
        +
        \sumsub{p \le i \le t\\ 1 \le j \le r_i} i(\underbrace{p - 1 - \mij}_{ {} = 0})
        & & \text{(By Lemma~\ref{lem: powerful}.)}
    \end{aligned} \end{gather*}
    Hence we can express
    \begin{equation*}
        z
        =
        \ordprod_                {1 \le j \le r_1} (g_{1j} - 1)^{m_{1j}}
        \ordprodsub{2 \le i < t\\ 1 \le j \le r_i} (\gij   - 1)^{p-1}
    \end{equation*}
    as we claimed.
\end{proof}

\begin{proof}[{Proof of Corollary~\textup{\ref{cor: soc p}}}]
    The proof follows from the claim \eqref{eq: claim} and Theorem~\ref{thm: main}.
\end{proof}

\section{Examples}
\label{sec: examples}

In this section, we illustrate our results by the first non-trivial examples:
group algebras of extra-special \( p \)-groups of order \( p^3 \) for odd prime \( p \).

\subsection{Extra-special \( p \)-group \( p_+^{1+2} \)}
\label{sec: exponent p}
Let \( G \) be an extra-special \( p \)-group
of order \( p^3 \) and exponent \( p \) defined by
\begin{equation*}
    G := p_+^{1+2} =
    \langle\, a, b, c \mid a^p = b^p = c^p = [a, c] = [b, c] = 1,\ [b, a] = c \,\rangle
\end{equation*}
and set
\( x := a - 1 \),
\( y := b - 1 \), and
\( z := c - 1 \).
Then \( G \) is not powerful and the dimension subgroups of \( G \) are
\begin{equation*} % same as lower central series as exponent p
    D_1 = \langle a, b, c \rangle,
    \quad
    D_2 = \langle c \rangle,
    \quad
    D_3 = 1.
\end{equation*}
We can show the following.
\begin{align*}
    \z{FG} &=
        \bigoplus_{\mathclap{0 \le i, j < p}}
            Fx^iy^jz^{p-1}
        \oplus
        \bigoplus_{\mathclap{0 \le k < p - 1}}
            Fz^k
    \\
    \soc{n}{FG} &=
        \bigoplus_{\mathclap{\substack{0 \le i, j, k < p\\ 4(p-1) - (i + j + 2k) < n}}}
            Fx^iy^jz^k
    \\
    \intertext{Hence we have}
    \zs{n}{FG} &=
        \bigoplus_{\mathclap{\substack{0 \le i, j    < p\\      2(p-1) - (i + j) < n}}}
            Fx^iy^jz^{p-1}
        \oplus
        \bigoplus_{\mathclap{\substack{0 \le k < p-1    \\           4(p-1) - 2k < n}}}
            Fz^k.
\end{align*}

\subsection{Extra-special \( p \)-group \( p_-^{1+2} \)}
\label{sec: exponent p^2}
Let \( G \) be an extra-special \( p \)-group
of order \( p^3 \) and exponent \( p^2 \) defined by
\begin{equation*}
    G := p_-^{1+2} =
    \langle\, a, b \mid a^p = b^{p^2} = 1,\ b^a = b^{1 + p} \,\rangle
\end{equation*}
and set
\( x := a - 1 \),
\( y := b - 1 \), and
\( z := c - 1 \) where \( c = b^p \).
Then \( G \) is powerful and the dimension subgroups of \( G \) are
\begin{equation*}
    D_1 = \langle a, b, c \rangle,
    \quad
    D_2 = \dotsb = D_p = \langle c \rangle,
    \quad
    D_{p+1} = 1.
\end{equation*}
We can show the following.
\begin{align*}
    \z{FG} &=
        \bigoplus_{\mathclap{0 \le i, j < p}}
            Fx^iy^jz^{p-1}
        \oplus
        \bigoplus_{\mathclap{0 \le k < p - 1}}
            Fz^k
    \\
    \soc{n}{FG} &=
        \bigoplus_{\mathclap{\substack{0 \le i, j, k < p\\ (p + 2)(p - 1) - (i + j + pk) < n}}}
            Fx^iy^jz^k
    \intertext{Hence we have}
    \zs{n}{FG} &=
        \bigoplus_{\mathclap{\substack{0 \le i, j    < p\\            2(p - 1) - (i + j) < n}}}
            Fx^iy^jz^{p-1}
        \oplus
        \bigoplus_{\mathclap{\substack{0 \le k < p - 1  \\           (p + 2)(p - 1) - pk < n}}}
            Fz^k.
\end{align*}
In particular, we have
\begin{equation*}
    \soc{p}{FG}
        =
        \bigoplus_{\mathclap{\substack{0 \le i, j    < p\\                   i + j \ge p - 1}}}
            Fx^iy^jz^{p-1}
        \subseteq
        \z{FG}
\end{equation*}
as expected.
Note \( x^{p-1}y^{p-1}z^{p-2} \in \soc{p+1}{FG} \setminus \z{FG} \).

\subsection{Remark}
In the above examples, there are two series of elements of \zs{n}{FG}:
Of the form \( x^iy^jz^{p-1} \) and \( z^k \).
Our main theorem states that an element of the Jennings basis like \( x^iy^jz^{p-1} \) is always central for all \( p \)-groups.

\section*{Acknowledgments}
The author wishes to acknowledge Professor Shigeo Koshitani and Dr. Yoshihiro Otokita for helpful discussions.

\end{document}